\newtheorem{neu}{}
\newtheorem{Cor}[neu]{Corollary}
\newtheorem*{Cor*}{Corollary}
\newtheorem{Thm}[neu]{Theorem}
\newtheorem*{Thm*}{Theorem}
\newtheorem*{Observation*}{Observation}
\newtheorem{Prop}[neu]{Proposition}
\newtheorem*{Prop*}{Proposition}
\newtheorem{Lemma}[neu]{Lemma}
\theoremstyle{definition}\newtheorem*{Rmk*}{Remark}
\newtheorem{Rmk}[neu]{Remark}
\newtheorem*{Ex*}{Example}
\newtheorem*{Qu*}{Question}
\newcommand{\N}{\mathbb{N}}
\newcommand{\Z}{\mathbb{Z}}
\newcommand{\R}{\mathbb{R}}
\newcommand{\om}{\omega}
\newcommand{\Om}{\Omega}
\newcommand{\Ham}{\mathrm{Ham}}
\newcommand{\Crit}{\mathrm{Crit}}
\newcommand{\beq}{\begin{equation}}
\newcommand{\beqn}{\begin{equation}\nonumber}
\newcommand{\eeq}{\end{equation}}
\newcommand{\bea}{\begin{equation}\begin{aligned}}
\newcommand{\bean}{\begin{equation}\begin{aligned}\nonumber}
\newcommand{\eea}{\end{aligned}\end{equation}}
\definecolor{Urs}{rgb}{0,.7,0}
\definecolor{Peter}{rgb}{0,0,1}
\definecolor{red}{rgb}{1,0,0}
\begin{document}
\title{Square roots of Hamiltonian Diffeomorphisms}
\author{Peter Albers}
\author{Urs Frauenfelder}
\address{
    Peter Albers\\
    Mathematisches Institut\\
    Westf\"alische Wilhelms-Universit\"at M\"unster}
\email{peter.albers@wwu.de}
\address{
    Urs Frauenfelder\\
    Department of Mathematics and Research Institute of Mathematics\\
    Seoul National University}
\email{frauenf@snu.ac.kr}
\keywords{}
\begin{abstract}
In this article we prove that on any closed symplectic manifold there exists an arbitrarily $C^\infty$-small Hamiltonian diffeomorphism not admitting a square root.
\end{abstract}
\maketitle


\section{Introduction}
Let $(M,\om)$ be a closed symplectic manifold, i.e.~$\om\in\Om^2(M)$ is a non-degenerate, closed 2-form. To a function $L:S^1\times M\to\R$ we associate the Hamiltonian vector field $X_L$ by setting
\beq
\om(X_{L_t},\cdot)=-dL_t(\cdot)
\eeq
where $L_t(x):=L(t,x)$. The flow $\phi_L^t:M\to M$ of the vector field $X_{L_t}$ is called a Hamiltonian flow. For simplicity we abbreviate
\beq
\phi_L=\phi_L^1\;.
\eeq

The Hamiltonian diffeomorphisms form the Lie group $\Ham(M,\om)$ with Lie algebra being the smooth functions modulo constants. We refer the reader to the book \cite{McDuff_Salamon_introduction_symplectic_topology} for the basics in symplectic geometry.

In this article we prove the following Theorem.
\begin{Thm}\label{thm:main}
In any $C^\infty$-neighborhood of the identity in $\Ham(M,\om)$ there exists a Hamiltonian diffeomorphism $\phi$ which has no square root, i.e.~for all Hamiltonian diffeomorphism $\psi$ (not necessarily close to the identity) 
\beq
\psi^2\neq\phi
\eeq
holds.
\end{Thm}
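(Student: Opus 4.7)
The plan is to construct $\phi$ of arbitrarily small $C^\infty$-norm whose fixed-point dynamics prevent the existence of any Hamiltonian diffeomorphism $\psi$ with $\psi^2=\phi$. Two elementary observations drive the strategy: (i) $\mathrm{Fix}(\psi)\subset \mathrm{Fix}(\psi^2)=\mathrm{Fix}(\phi)$, and (ii) $\psi$ commutes with $\phi$, since $\psi\phi\psi^{-1}=\psi^2=\phi$; in particular $\psi$ permutes the (finite) fixed point set of $\phi$ whenever the fixed points are isolated.

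\textbf{Steps.} First, inside a small Darboux chart in $M$, I would choose an autonomous Hamiltonian $H$ of arbitrarily small $C^\infty$-norm whose time-one flow $\phi=\phi_H$ has a carefully arranged finite set of non-degenerate fixed points, with the linearizations at distinct fixed points lying in pairwise different conjugacy classes of $\Sp(2n,\R)$. Assuming $\psi^2=\phi$, the non-conjugacy of these linearizations rules out $\psi$ exchanging distinct fixed points, so $\psi$ must fix each $p\in\mathrm{Fix}(\phi)$; hence $(d\psi_p)^2=d\phi_p$ on all of $\mathrm{Fix}(\phi)$. The contradiction would then come from combining this pointwise constraint with an Arnold-type lower bound on $\#\mathrm{Fix}(\psi)$ and a finer local invariant at fixed points, such as the Conley--Zehnder index $\CZ$ or local Floer homology, to show that no coherent choice of $\psi$-germs at the $p$'s can be globally realized.

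\textbf{Main obstacle.} The crux is the last step. No purely infinitesimal obstruction at a single fixed point can suffice: any symplectic matrix close to $I$ admits a symplectic square root via the exponential map, in fact a whole family of them differing by involutions in its centralizer. The contradiction must therefore combine several global layers at once---the forced permutation action of $\psi$ on $\mathrm{Fix}(\phi)$, a global Arnold-type fixed point count on $M$, and an iteration formula for Conley--Zehnder indices (or equivalently $\Z/2$-grading information in Floer homology) relating $\CZ(d\phi_p)$ to $\CZ(d\psi_p)$. Arranging all of this while keeping $\phi$ arbitrarily $C^\infty$-close to the identity is delicate; it will likely be accomplished by an explicit model inside a single Darboux chart, in which the local $d\phi_p$ is tuned to a Conley--Zehnder datum that no Hamiltonian $\psi$ can simultaneously realize at all the fixed points forced on it by observation (ii).
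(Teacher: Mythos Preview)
Your proposal has a genuine gap at exactly the point you flag as the ``main obstacle,'' and the remedy you sketch does not close it. You correctly observe that every symplectic matrix near the identity admits a symplectic square root, so no purely local obstruction at a fixed point of $\phi$ can succeed. But the proposed global ingredients---Arnold-type lower bounds on $\#\mathrm{Fix}(\psi)$ together with Conley--Zehnder iteration formulas---do not produce a contradiction either: Arnold inequalities bound fixed points from \emph{below}, which goes the wrong way, and for linearizations close to the identity the relation between $\CZ(d\phi_p)$ and $\CZ(d\psi_p)$ under squaring is always satisfiable. You have not named any concrete incompatibility, and I do not see one along these lines; the plan remains a hope rather than an argument.

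The paper bypasses this difficulty by shifting attention from fixed points of $\phi$ to its $2k$-\emph{cycles}, i.e.\ $\phi$-orbits of minimal period exactly $2k$. Milnor's observation is that when $\phi=\psi^2$, the map $[x_1,\dots,x_{2k}]\mapsto[\psi(x_1),\dots,\psi(x_{2k})]$ is a \emph{free} $\Z/2$-action on $\mathscr{C}^{2k}(\phi)$; freeness follows from the purely arithmetic fact that $\gcd(2r-1,2k)<2k$. Thus an \emph{odd} count $\#\mathscr{C}^{2k}(\phi)$ obstructs any square root---no linearizations, no indices, no Floer theory required. The construction then realizes $\#\mathscr{C}^{2k}(\phi)=1$ inside a single Darboux ball: a radial Hamiltonian $H$ produces a circle on which $\phi_H$ rotates by angle $\pi/k$, and a small angular perturbation $\epsilon F$ breaks this circle into $2k$ isolated points forming a single $2k$-cycle, with no other $2k$-cycles on $M$ after a careful extension. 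Choosing $k$ large (and the other parameters small) makes $\phi$ arbitrarily $C^\infty$-close to the identity. The idea you were missing is to work with higher-period orbits rather than fixed points: the parity obstruction on $2k$-cycles is combinatorial and completely avoids the local square-root problem that blocks your approach.
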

An immediate corollary of Theorem \ref{thm:main} is the following.
\begin{Cor}
The exponential map
\bea
\mathrm{Exp}:C^\infty(M,\R)/\R&\to\Ham(M,\om)\\
[L]&\mapsto\phi_L
\eea 
is not a local diffeomorphism.
\end{Cor}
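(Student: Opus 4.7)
The plan is to construct $\phi$ inside a Darboux chart of $M$ and to obstruct the existence of any square root through a fixed-point analysis combined with a normal-form argument at a distinguished fixed point. I fix a Darboux chart $(U,\om_0)\subset(M,\om)$ identified with a ball in $(\R^{2n},\om_{\mathrm{std}})$, and set $\phi = \phi_H^1$ for a time-independent, $C^\infty$-small Hamiltonian $H\colon U\to\R$, compactly supported in $U$ and extended by zero to $M$. Smallness of $\|H\|_{C^\infty}$ keeps $\phi$ arbitrarily $C^\infty$-close to $\id$, so it suffices to exhibit a single $H$ such that $\phi$ admits no square root in $\Ham(M,\om)$.

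Next, assume toward a contradiction that $\psi\in\Ham(M,\om)$ satisfies $\psi^2=\phi$. Then $\psi$ preserves $\mathrm{Fix}(\phi)$ and acts on it with orbits of length at most two. If $\psi(p)=p$, then $d\psi_p$ is a symplectic square root of $d\phi_p$; if $\psi$ swaps $p$ and $q$, then $d\phi_p$ and $d\phi_q$ are conjugate in $\Sp(2n,\R)$. I take $H$ Morse with isolated non-degenerate critical points $p_1,\ldots,p_k$ inside $U$, chosen so that the linearizations $d\phi_{p_i}\in\Sp(2n,\R)$ are pairwise non-conjugate. This forces $\psi$ to fix each $p_i$ individually, reducing the problem to a local germ question at each $p_i$.

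At a distinguished fixed point $p=p_1$ with non-resonant linearization, I would pass to a symplectic Birkhoff normal form of the germ of $\phi$ at $p$. The equation $\psi^2=\phi$ at the level of formal power series translates into an explicit polynomial recursion on the normal-form coefficients of $\phi$ in terms of those of $\psi$. By adjusting the higher-order jets of $H$ at $p$ --- a perturbation that remains $C^\infty$-small --- I arrange for this recursion to have no formal solution, regardless of which branch of symplectic square root of $d\phi_p$ is chosen. Since any global $\psi\in\Ham(M,\om)$ with $\psi^2=\phi$ would fix $p$ and produce such a germ, this yields the desired contradiction.

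The main obstacle in this programme is the last step: pinpointing an explicit, robust formal-power-series obstruction to being a square that accounts for every branch of symplectic square root of $d\phi_p$, and showing that such an obstruction can be realised by an arbitrarily $C^\infty$-small perturbation of the zero Hamiltonian. The preceding steps --- localisation, generic non-degeneracy, and pairwise non-conjugacy of linearizations --- are essentially bookkeeping and follow from standard genericity arguments.
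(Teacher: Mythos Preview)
Your plan has a fatal flaw at the very first step. You take $\phi=\phi_H^1$ for a \emph{time-independent} Hamiltonian $H$. Every such $\phi$ lies in the image of $\mathrm{Exp}$ and therefore automatically admits the square root $\psi=\phi_{H/2}^1=\phi_H^{1/2}\in\Ham(M,\om)$, regardless of how you adjust the jets of $H$. No tuning of higher-order Taylor coefficients of an autonomous $H$ can ever obstruct the existence of a square root --- this is precisely the mechanism by which the Corollary follows from the Theorem: everything in the image of $\mathrm{Exp}$ has all roots.

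Even if you drop autonomy and allow a genuinely time-dependent perturbation, your local strategy still fails at a fixed point $p$ with \emph{non-resonant} linearization, which is exactly the case you single out. Under non-resonance the germ of $\phi$ at $p$ is formally symplectically conjugate to its Birkhoff normal form, which is the formal time-$1$ map of an integrable Hamiltonian $K$ depending only on the action variables. The time-$1$ map of $K/2$ then provides a formal symplectic square root, and this works for every branch of a symplectic square root of $d\phi_p$. So the recursion you hope to obstruct is always solvable in the non-resonant regime; any genuine formal obstruction would have to come from a carefully engineered \emph{resonant} linearization together with a normal-form analysis you have not supplied.

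The paper proceeds along an entirely different, global line: Milnor's observation that if $\phi=\psi^2$ then the set of $2k$-cycles of $\phi$ carries a free $\Z/2$-action, so its cardinality (when finite) is even. One then constructs $\phi=\phi_H\circ\phi_{\epsilon F}$ --- a composition of two autonomous flows, hence not manifestly autonomous --- so that exactly one $2k$-cycle survives. This parity obstruction is global and bypasses germ-level analysis altogether.
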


In the proof of the Theorem we use the following beautiful observation by Milnor \cite[Warning 1.6]{Milnor_Remarks_on_infinite_dimensional_Lie_groups}. Milnor observed that an obstruction to the existence of a square root is an odd number of $2k$-cycles, see next section for details. The main work in this article is to construct an example in the symplectic category.

\subsection*{Acknowledgements}
The authors are indebted to Kaoru Ono for invaluable discussions and drawing our attention to Milnor's article \cite{Milnor_Remarks_on_infinite_dimensional_Lie_groups}.

This material is supported by the SFB 878 -- Groups, Geometry and Actions (PA) and by the Basic Research fund 20100007669 funded by the Korean government basic (UF).

\section{Milnor's observation}

We define
\beq
CM^k:=M^k\big/(\Z/k)
\eeq
where $\Z/k$ acts by cyclic shifts on $M^k$. We write elements of $CM^k$ as
\beq
[x_1,\ldots,x_{k}]\in CM^k\;.
\eeq
The space of $k$-cycles of a diffeomorphism $\phi:M\to M$ is
\beq
\mathscr{C}^{k}(\phi):=\big\{[x_1,\ldots,x_{k}]\in CM^{k}\mid \phi^j(x_i)\neq x_i\,\forall j=1,\ldots, k-1,\;\phi(x_i)=x_{i+1}\big\}\;.
\eeq
We point out that if $[x_1,\ldots,x_{k}]\in\mathscr{C}^{k}(\phi)$ then $\phi^k(x_i)=x_i$ for $i=1,\ldots,k$.

\begin{Prop}[Milnor \cite{Milnor_Remarks_on_infinite_dimensional_Lie_groups}]\label{prop:Milnor}
If $\phi=\psi^2$ then $\mathscr{C}^{2k}(\phi)$ admits a free $\Z/2$-action. In particular, $\#\mathscr{C}^{2k}(\phi)$ is even if $\mathscr{C}^{2k}(\phi)$ is a finite set.
\end{Prop}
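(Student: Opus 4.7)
The plan is to define a natural $\Z/2$-action on $\mathscr{C}^{2k}(\phi)$ by applying $\psi$ pointwise, and then use the fact that $\psi^2=\phi$ together with a parity argument to show this action is free.

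First I would define the candidate action. Since $\psi$ commutes with $\phi=\psi^2$, for any representative $(x_1,\dots,x_{2k})\in M^{2k}$ of a class in $CM^{2k}$ I put
\[
\tau\bigl([x_1,\dots,x_{2k}]\bigr):=\bigl[\psi(x_1),\dots,\psi(x_{2k})\bigr].
\]
This is well-defined on the quotient $CM^{2k}$ since $\psi$ is applied componentwise and commutes with cyclic shifts. I would then check that $\tau$ preserves $\mathscr{C}^{2k}(\phi)$: from $\phi(x_i)=x_{i+1}$ and $\phi\psi=\psi\phi$ one gets $\phi(\psi(x_i))=\psi(x_{i+1})$, and the non-degeneracy condition $\phi^j(\psi(x_i))=\psi(\phi^j(x_i))\neq\psi(x_i)$ for $1\le j\le 2k-1$ follows from injectivity of $\psi$.

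Next I would verify $\tau^2=\id$. Indeed $\tau^2[x_1,\dots,x_{2k}]=[\psi^2(x_1),\dots,\psi^2(x_{2k})]=[\phi(x_1),\dots,\phi(x_{2k})]=[x_2,\dots,x_{2k},x_1]$, which is the same element of $CM^{2k}$. So $\tau$ is indeed a $\Z/2$-action.

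The main point (and the only subtle step) is freeness. Suppose $\tau$ fixes some $[x_1,\dots,x_{2k}]$. Then there is an integer $\ell\in\{0,\dots,2k-1\}$ with $\psi(x_i)=x_{i+\ell}$ for all $i$ (indices mod $2k$). Applying $\psi$ once more gives $\phi(x_i)=\psi^2(x_i)=x_{i+2\ell}$. But by definition of a $2k$-cycle $\phi(x_i)=x_{i+1}$, so $x_{i+2\ell}=x_{i+1}$, i.e.\ $2\ell\equiv 1\pmod{2k}$. This is impossible because the left-hand side is even and the right-hand side is odd modulo an even number. Hence no fixed points exist and the $\Z/2$-action is free. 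The parity statement $\#\mathscr{C}^{2k}(\phi)\in 2\Z$ in the finite case is then immediate, since a free $\Z/2$-action partitions the set into orbits of size two. The main obstacle is essentially this parity argument; everything else is verifying that applying $\psi$ componentwise gives a well-defined involution on the space of $2k$-cycles.
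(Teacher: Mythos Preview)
Your proof is correct and follows the same overall strategy as the paper: define the involution by applying $\psi$ componentwise, check it is a well-defined involution on $\mathscr{C}^{2k}(\phi)$, and then argue by contradiction that it has no fixed points. The only difference is in how the contradiction is reached. The paper, from $\psi(x_i)=x_{i+r}=\phi^r(x_i)=\psi^{2r}(x_i)$, deduces $\psi^{2r-1}(x_i)=x_i$, squares to obtain $\phi^{2r-1}(x_i)=x_i$, and then invokes the Euclidean algorithm on $\gcd(2r-1,2k)$ to contradict minimality of the period. Your argument is more direct: applying $\psi$ twice to $\psi(x_i)=x_{i+\ell}$ immediately gives $x_{i+1}=\phi(x_i)=x_{i+2\ell}$, and distinctness of the $x_i$ forces the impossible congruence $2\ell\equiv 1\pmod{2k}$. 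Both are valid; yours avoids the detour through $\gcd$ and is a bit cleaner.
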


For the convenience of the reader we include a proof of Milnor's ingenious observation.

\begin{proof}
We define 
\bea
I:\mathscr{C}^{2k}(\phi)&\to\mathscr{C}^{2k}(\phi)\\
[x_1,\ldots,x_{2k}]&\mapsto[\psi(x_1),\ldots,\psi(x_{2k})]\;.
\eea 
Since $\psi\circ\phi=\phi\circ\psi$ and $\psi^2=\phi$ the map $I$ is well-defined and an involution. We assume by contradiction that $[x_1,\ldots,x_{2k}]$ is a fixed point of $I$, i.e.~there exists $0\leq r\leq2k-1$
\beq
\psi(x_i)=x_{i+r}
\eeq
where we read indices $\Z/2k$-cyclically. Using $x_{i+r}=\phi^r(x_i)$ we get
\beq
\psi(x_i)=\phi^r(x_i)=\psi^{2r}(x_i)
\eeq
and thus
\beq
\psi^{2r-1}(x_i)=x_i\;.
\eeq
In particular,
\beq
x_i=\psi^{2r-1}(x_i)=\psi^{2r-1}(\psi^{2r-1}(x_i))=\psi^{4r-2}(x_i)=\phi^{2r-1}(x_i)\;.
\eeq
In summary we have
\beq
x_i=\phi^{2r-1}(x_i)\quad\text{and}\quad x_i=\phi^{2k}(x_i)\;.
\eeq
In general, if 
\beq
z=\phi^a(z)\quad\text{and}\quad z=\phi^b(z)
\eeq
for $a,b\in\Z$ then 
\beq
z=\phi^{\text{lcd}(a,b)}(z)
\eeq
since by the Euclidean algorithm there exists $n_1,n_2\in\Z$ with
\beq
\text{lcd}(a,b)=n_1a+n_2b\;.
\eeq
In our specific situation $2r-1$ is odd and $2k$ is even and thus 
\beq
1\leq \text{lcd}(2r-1,2k)<2k
\eeq
contradicting the assumption $ \phi^j(x_i)\neq x_i\,\forall j=1,\ldots, 2k-1$. This proves the Proposition.
\end{proof}

\section{Proof of Theorem \ref{thm:main}}

Let $(M,\om)$ be a closed symplectic manifold. We fix a Darboux chart $B^{2N}(R)\cong B\subset M$ where $B^{2N}(R)$ is the open ball of radius $R$ in $\R^{2N}$.
For an integer $k\geq1$ and a positive number $\delta>0$ we choose a smooth function $\rho:[0,R^2]\to\R$ satisfying the following
\beq
\left\{
\begin{aligned}
\label{eqn:properties_rho}
&\frac{\pi}{2k}\geq\rho'(r)>0\;,\\
&\rho'(r)=\frac{\pi}{2k}\quad\Longleftrightarrow \quad r=\tfrac12R^2\;,\\
&\rho'|_{\big[\frac89R^2,R^2\big]}=\delta>0\;.\\
 \end{aligned}\right.
\eeq
We set for $1\leq\nu\leq N$
\beq\label{eqn:zeta}
\zeta(\nu):=
\begin{cases}
1 & \nu=N\\
\frac{9}{10} & \text{ else} 
\end{cases}
\eeq
and define
\bea
H:B^{2N}(R)&\to\R\\
z&\mapsto \rho\left(\sum_{\nu=1}^N\zeta(\nu)|z_\nu|^2\right)\;.
\eea
We denote by $\phi_H^t:B^{2N}(R)\to B^{2N}(R)$ the induced Hamiltonian flow. We recall that the Hamiltonian flow of $z\mapsto |z|^2$ is given by $z\mapsto \exp(2it)z$ thus
\beq\label{eqn:flow_of_H}
\big(\phi_H^t(z)\big)_\nu=\exp\left[\rho'\left(\sum_{\nu=1}^N\zeta(\nu)|z_\nu|^2\right)2i\zeta(\nu)t\right]z_\nu\;.
\eeq
We point out that $\phi_H^t$ preserves the quantities $|z_\nu|$, $\nu=1,\ldots,N$. 
\begin{Lemma}\label{lem:fix_points_of_phi_H_2k}
The fixed points of $\phi_H^{2k}$ are precisely $z=0$ and the circle
\beq
C:=\Big\{(z_1,\ldots,z_N)\in B^{2N}(R)\mid |z_N|^2=\tfrac12R^2\text{ and }z_1=\ldots=z_{N-1}=0\Big\}\;.
\eeq 
Moreover, $\phi_H$ acts on $C$ by rotation of the last coordinate by an angle of $\frac{\pi}{k}$.
 \end{Lemma}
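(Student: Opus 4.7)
The plan is to exploit that the conserved quantity
\beq
s := \sum_{\nu=1}^N \zeta(\nu)|z_\nu|^2
\eeq
is preserved along the flow (since $|z_\nu|$ is preserved by \eqref{eqn:flow_of_H}), so that the argument of each exponential factor in \eqref{eqn:flow_of_H} is a \emph{constant} $\rho'(s)\cdot 2\zeta(\nu) t$ along the orbit of any $z$. Writing out $\phi_H^{2k}(z)=z$ component by component, the fixed point condition reads: for every $\nu$ with $z_\nu\neq 0$,
\beq
4k\,\zeta(\nu)\,\rho'(s)\;\in\;2\pi\Z,\quad\text{equivalently}\quad 2k\,\zeta(\nu)\,\rho'(s)\;\in\;\pi\Z.
\eeq

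The first step is to rule out nontrivial fixed points with some $z_\nu\neq 0$ for $\nu<N$. In that case $\zeta(\nu)=9/10$, and the bound $0<\rho'(s)\leq \pi/(2k)$ from \eqref{eqn:properties_rho} gives
\beq
0\;<\;2k\cdot\tfrac{9}{10}\cdot\rho'(s)\;\leq\;\tfrac{9\pi}{10}\;<\;\pi,
\eeq
so this quantity cannot lie in $\pi\Z$. Hence at any nonzero fixed point we must have $z_1=\dots=z_{N-1}=0$.

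The second step handles the remaining coordinate. If $z_N\neq 0$ then $\zeta(N)=1$ and we need $2k\,\rho'(s)\in\pi\Z$; the same bound $\rho'(s)\leq\pi/(2k)$ forces equality $\rho'(s)=\pi/(2k)$, and by the second clause of \eqref{eqn:properties_rho} this characterizes $s=\tfrac12 R^2$. Combined with $z_1=\dots=z_{N-1}=0$ we get $s=|z_N|^2=\tfrac12 R^2$, i.e.\ $z\in C$. Conversely every $z\in C$ is obviously fixed by $\phi_H^{2k}$ by the above computation, and $z=0$ is trivially fixed. This identifies the fixed-point set.

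For the last assertion, on $C$ we have $s=\tfrac12 R^2$, so $\rho'(s)=\pi/(2k)$ and $\zeta(N)=1$. Plugging $t=1$ into \eqref{eqn:flow_of_H} gives $(\phi_H(z))_N = e^{i\pi/k}z_N$, i.e.\ rotation of the last coordinate by $\pi/k$.

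The only subtle point is step one: it crucially uses the numerical inequality $\tfrac{9}{10}\cdot\pi<\pi$ together with the sharp upper bound $\rho'\leq\pi/(2k)$ from \eqref{eqn:properties_rho}. This is exactly why the weights $\zeta(\nu)$ for $\nu<N$ were chosen strictly less than $1$: it makes the rotation speed in the first $N-1$ complex coordinates strictly incommensurable with $\pi$-periodicity at time $2k$, eliminating all fixed points off $C$.
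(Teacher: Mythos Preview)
Your proof is correct and follows essentially the same approach as the paper: both write out the fixed-point equation componentwise from \eqref{eqn:flow_of_H}, use the bound $0<\rho'\leq\pi/(2k)$ together with $\zeta(\nu)=\tfrac{9}{10}<1$ to force $z_\nu=0$ for $\nu<N$, and then use the equality case in \eqref{eqn:properties_rho} to pin down $|z_N|^2=\tfrac12R^2$. Your write-up is simply more explicit about the numerical inequality and about the converse inclusion.
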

\begin{proof}
Assume $\phi_H^{2k}(z)=z$ which is equivalent to 
\beq
\exp\left[\rho'\left(\sum_{\nu=1}^N\zeta(\nu)|z_\nu|^2\right)2i\zeta(\nu)2k\right]z_\nu=z_\nu,\quad\nu=1,\ldots,N\;,
\eeq
thus, either $z_\nu=0$ or
\beq
\rho'\left(\sum_{\nu=1}^N\zeta(\nu)|z_\nu|^2\right)4k\zeta(\nu)\in2\pi\Z\;.
\eeq
From $\rho'(r)\leq\frac{\pi}{2k}$ we conclude that $z_1=\ldots=z_{N-1}=0$. Moreover, $z_N=0$ or 
\beq
\rho'\left(\sum_{\nu=1}^N\zeta(\nu)|z_\nu|^2\right)=\rho'(|z_N|^2)=\frac{\pi}{2k}
\eeq
holds. In summary, either $z=0$ or $z\in C$. This together with \eqref{eqn:flow_of_H} proves the Lemma.
\end{proof}
We now perturb $H$. For this we fix a smooth cut-off function $\beta:[0,R^2]\to[0,1]$ satisfying
\beq
\beta|_{\big[\frac13R^2,\frac23R^2\big]}=1\quad\text{and}\quad\beta|_{\big[0,\frac19R^2\big]\cup\big[\frac89R^2,R^2\big]}=0
\eeq
and set
\beq
F(z):=\beta(|z_N|^2)\cdot\text{Re}\left(\frac{z_N^k}{|z_N|^k}\right):B^{2N}(R)\to\R
\eeq
where $\text{Re}$ is the real part. If we introduce new coordinates $(z_1,\ldots,z_{N-1},r,\vartheta)$, where $z_N=r\exp(i\vartheta)$, the function $F$ equals
\beq
F(z)=\beta(r^2)\cos\big(k\vartheta\big)\;.
\eeq
We point out  that the Hamiltonian diffeomorphism $\phi_H\circ\phi_{\epsilon F}$ maps $B^{2N}(R)$ into itself.
\begin{Lemma}\label{lem:small_epsilon}
There exists $\epsilon_0>0$ such that for all $0<\epsilon<\epsilon_0$
\beq
\#\mathscr{C}^{2k}(\phi_H\circ\phi_{\epsilon F})=1\;.
\eeq
%
%
\end{Lemma}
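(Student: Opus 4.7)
The plan is to combine a rotation-number argument forcing every $2k$-cycle onto the slice $\Sigma := \{z_1 = \cdots = z_{N-1} = 0\}$ with an explicit analysis of $\varphi_\epsilon := \phi_H \circ \phi_{\epsilon F}$ inside the region where $\beta \equiv 1$. For the first step, valid for any $\epsilon$, note that the coordinates $z_\nu$ with $\nu < N$ are preserved by $\phi_{\epsilon F}$ (since $F$ depends only on $z_N$), and are multiplied by $\phi_H$ by a phase of argument $2\rho'(S)\zeta(\nu)$. By \eqref{eqn:properties_rho} and \eqref{eqn:zeta}, the cumulative argument over $2k$ iterations of $\varphi_\epsilon$ lies in $(0,\,2\pi\zeta(\nu)] \subset (0, 2\pi)$, hence cannot be a nonzero multiple of $2\pi$, and $z_\nu = 0$ follows.

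I would then pass to coordinates $(p,\vartheta)$ on $\Sigma$ defined by $p := \tfrac12(|z_N|^2 - R^2/2)$ and $z_N = |z_N|\,e^{i\vartheta}$, in which $\omega|_\Sigma = dp\wedge d\vartheta$ and $C = \{p = 0\}$. Setting $g(p) := 2\rho'(2p + R^2/2)$, one has $\phi_H(p,\vartheta) = (p,\vartheta + g(p))$ with $g(p) \in (0,\pi/k]$ and equality iff $p = 0$, while on the strip $\{|p| \le R^2/12\}$ the cut-off is $\beta \equiv 1$, so $F = \cos(k\vartheta)$, $\phi_{\epsilon F}(p,\vartheta) = (p + \epsilon k\sin(k\vartheta),\vartheta)$, and
\[
\varphi_\epsilon(p,\vartheta) = \bigl(p + \epsilon k\sin(k\vartheta),\; \vartheta + g(p + \epsilon k\sin(k\vartheta))\bigr).
\]

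The hard step is to show that, for $\epsilon$ small, every $2k$-cycle on $\Sigma$ has its entire orbit in $\{|p|\le R^2/12\}$. Lemma \ref{lem:fix_points_of_phi_H_2k} gives $\mathrm{Fix}(\phi_H^{2k})\cap\Sigma = \{0\}\cup C$, and $\phi_H^{2k} - \mathrm{id}$ is uniformly bounded below in norm on $\Sigma$ outside any prescribed neighbourhood of this locus (using $4k\rho'(|z_N|^2)\in(0,2\pi)$ away from $C$, together with $\rho' \equiv \delta$ near the boundary). A continuity argument in $\epsilon$ then places $\mathrm{Fix}(\varphi_\epsilon^{2k})\cap\Sigma$, for $\epsilon$ small, inside any such neighbourhood, which I take to be contained in $\{|p|\le R^2/12\}$ around $C$ together with a small disc around $0$. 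Near $0$, $F \equiv 0$ so $\varphi_\epsilon = \phi_H$, and $4k\rho'(|z_N|^2) \in (0,2\pi)$ rules out any $2k$-cycle there except the fixed point $0$ itself, which is only a $1$-cycle.

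Inside $\{|p|\le R^2/12\}$ the recursion telescopes to
\[
\vartheta_{2k}-\vartheta_0 = \sum_{j=1}^{2k} g(p_j), \qquad p_{2k}-p_0 = \epsilon k \sum_{j=0}^{2k-1} \sin(k\vartheta_j).
\]
For a fixed point of $\varphi_\epsilon^{2k}$, the first sum must lie in $2\pi\Z$; being confined to $(0,2\pi]$ by $g(p)\le\pi/k$, it equals $2\pi$, and the equality case in the maximum bound forces $p_j = 0$ for every $j$. The recursion $p_{j+1} = p_j + \epsilon k\sin(k\vartheta_j)$ then yields $\sin(k\vartheta_j) = 0$, so $\vartheta_j = m_j\pi/k$, and $\vartheta_{j+1} = \vartheta_j + g(0) = \vartheta_j + \pi/k$ gives $m_{j+1} = m_j + 1$. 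The $2k$ resulting fixed points $\{(0, m\pi/k) : m = 0,\ldots,2k-1\}$ are visited in cyclic order by $\varphi_\epsilon$, forming a single $\varphi_\epsilon$-orbit and hence exactly one $2k$-cycle.
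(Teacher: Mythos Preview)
Your proof is correct and follows the same strategy as the paper's: the rotation-angle bound forcing $z_\nu=0$ for $\nu<N$, the angle-sum computation in the region $\beta\equiv 1$ pinning the cycle down to the set $D=\{(0,\ldots,0,\tfrac12 R^2,j\pi/k)\}$, and a compactness argument in $\epsilon$ (combined with $F\equiv 0$ near the origin) to rule out any other fixed points of the $2k$-th iterate. The only differences are organizational---you reduce to $\Sigma$ globally at the outset and invoke compactness \emph{before} the explicit calculation, thereby sidestepping the paper's explicit choice of $\bar\epsilon$ needed to keep orbits inside $A(\tfrac13,\tfrac23)$---but the ingredients are identical.
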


\begin{proof}
We set
\beq
D:=\left\{(z_1,\ldots,z_{N-1},r,\vartheta)\in C\mid \vartheta=\frac{j\pi}{k},\,j=0,\ldots,2k-1\right\}
\eeq
where $C$ is defined in Lemma \ref{lem:fix_points_of_phi_H_2k}. The same lemma implies that $\phi_H$ acts on $D$ as a cyclic permutation sending $\frac{j\pi}{k}$ to $\frac{(j+1)\pi}{k}$. Moreover, we have
\beq
\phi_{\epsilon F}z=z
\eeq
for $z\in D$ since $D\subset\Crit F$. In particular, $D$ corresponds precisely to a single element in $\mathscr{C}^{2k}(\phi_H\circ\phi_{\epsilon F})$. It remains to show that there are no other $2k$-cycles. We prove something stronger, namely that for sufficiently small $\epsilon>0$ the only other fixed point of $\big(\phi_H\circ\phi_{\epsilon F}\big)^{2k}$ is $z=0$.

For $0<a<b$ we set
\beq
A(a,b):=\big\{(z_1,\ldots,z_{N-1},r,\vartheta)\in B^{2N}(R)\mid r\in[aR^2,bR^2]\big\}\;.
\eeq
We observe that on $A(\frac13,\frac23)$ we have $\beta=1$ and thus the flow of $\epsilon F$ is given by 
\beq\label{eqn:flow_of_F}
(z_1,\ldots,z_{N-1},r,\vartheta)\mapsto (z_1,\ldots,z_{N-1},\sqrt{-2\epsilon k\sin(k\vartheta)t+r^2},\vartheta)\;.
\eeq
In particular, if we set
\beq
\bar\epsilon:=\frac{7R^4}{324k^2}
\eeq
then for $0<\epsilon<\bar\epsilon$ we conclude that
\beq
\big(\phi_H\circ\phi_{\epsilon F}\big)^{2k}\left(A(\tfrac49,\tfrac59)\right)\subset A(\tfrac13,\tfrac23)\;,
\eeq
since $\phi_H^t$ preserves the $r$ coordinate. Fix $w\in A(\tfrac49,\tfrac59)$ with $\big(\phi_H\circ\phi_{\epsilon F}\big)^{2k}(w)=w$ and set for $j=0,\ldots,2k$
\bea
z^j_{\nu}&:=P_{z_\nu}\Big(\big(\phi_H\circ\phi_{\epsilon F}\big)^{j}(w)\Big)\;,\quad \nu=1,\ldots,N-1,\\
r^j&:=P_r\Big(\big(\phi_H\circ\phi_{\epsilon F}\big)^{j}(w)\Big)\;,\\
\vartheta^j&:=P_\vartheta\Big(\big(\phi_H\circ\phi_{\epsilon F}\big)^{j}(w)\Big)\;,
\eea
where $P_{z_\nu}$, $P_{r}$, and $P_{\vartheta}$ are the projections on the respective coordinates. It follows from equation $\eqref{eqn:flow_of_F}$ that
\beq
P_{z_\nu}\Big(\big(\phi_H\circ\phi_{\epsilon F}\big)^{j}(w)\Big)=P_{z_\nu}\big(\phi_H^j(w)\big)\quad \nu=1,\ldots,N-1\;.
\eeq
By the same argument as in the proof of Lemma \ref{lem:fix_points_of_phi_H_2k} we conclude
\beq
z^j_\nu=0\quad\forall \nu=1,\ldots,N-1\text{ and }\forall j=0,\ldots,2k\;.
\eeq
Next, it follows from the flow equations \eqref{eqn:flow_of_H} and $\eqref{eqn:flow_of_F}$ 
\beq
0< \vartheta_{j+1}-\vartheta_j\leq\frac{\pi}{k} \mod 2\pi\;.
\eeq
By \eqref{eqn:properties_rho} equality holds if and only if $r_{j+1}=\frac12R^2$. Using again $\big(\phi_H\circ\phi_{\epsilon F}\big)^{2k}(w)=w$ we deduce
\beq
\vartheta_{2k}-\vartheta_0=0\mod 2\pi
\eeq
and therefore
\beq
r_0=r_1=\ldots=r_{2k}=\tfrac12R^2\;.
\eeq
In summary
\beq
w=(0,\ldots,0,\tfrac12R^2,\vartheta_0)
\eeq
with $\vartheta_0\in\frac{\pi}{k}\Z$, i.e.~$w\in D$. Thus, we proved that the only $2k$-cycle of $\phi_H\circ\phi_{\epsilon F}$ in the region $A(\tfrac49,\tfrac59)$ is the one corresponding to the set $D$. Therefore it remains to prove that after possibly shrinking $\bar\epsilon$ there are no other fixed points of $\big(\phi_H\circ\phi_{\epsilon F}\big)^{2k}$ outside $A(\tfrac49,\tfrac59)$ except for $z=0$. We argue by contradiction. 

We assume that there exists a sequence $\epsilon_m\to0$ and a sequence $(z^m)_{m\in\N}$ of points in $B^{2N}(R)\setminus A(\tfrac49,\tfrac59)$ with
\beq
\big(\phi_H\circ\phi_{\epsilon_m F}\big)^{2k}(z^m)=z^m\quad\forall m\in\N\;.
\eeq
By compactness we may assume that $z^m\to z^*\in B^{2N}(R)\setminus \text{int}A(\tfrac49,\tfrac59)$ with
\beq
\phi_H^{2k}(z^*)=z^*\;.
\eeq
It follows from Lemma \ref{lem:fix_points_of_phi_H_2k} that $z^*=0$ and thus for $M$ sufficiently large
\beq
z^m\in B^{2N}(\tfrac13R)\quad\forall m\geq M\;.
\eeq
Then by definition of $\beta$ the restriction of $\phi_{\epsilon_m F}$ to the ball $B^{2N}(\frac13R)$ equals the identity. Moreover, since $\phi_H$ fixes all balls centered at zero we have
\beq
z^m=\big(\phi_H\circ\phi_{\epsilon_m F}\big)^{2k}(z^m)=\phi_H^{2k}(z^m)\quad\forall m\geq M\;.
\eeq
Applying again Lemma \ref{lem:fix_points_of_phi_H_2k} we conclude that $z^m=0$ for all $m\geq M$. This proves the Lemma.
\end{proof}

\begin{Rmk}
Proposition \ref{prop:Milnor} together with Lemma \ref{lem:small_epsilon} implies that for all $0<\epsilon<\epsilon_0$ the Hamiltonian diffeomorphism $\phi_H\circ\phi_{\epsilon F}:B^{2N}(R)\to B^{2N}(R)$ has no square root. 
\end{Rmk}

We are now in the position to prove Theorem \ref{thm:main}.

\begin{proof}[Proof of Theorem \ref{thm:main}] 
We choose $k\in\Z$, $\delta>0$ and $0<\epsilon<\epsilon_0$ (cp.~Lemma \ref{lem:small_epsilon}) so that the Hamiltonian diffeomorphism 
\beq
\phi_H\circ\phi_{\epsilon F}:B^{2N}(R)\to B^{2N}(R)
\eeq 
has precisely one $2k$-cycle. By construction $\phi_H\circ\phi_{\epsilon F}$ equals the map
\beq
(z_1,\ldots,z_N)\mapsto\left(e^{\frac{9i\delta}{5}}z_1,\ldots, e^{\frac{9i\delta}{5}}z_{N-1},e^{2i\delta}z_N\right)
\eeq
near the boundary of $B^{2N}(R)$. Indeed, if $z\in\partial B^{2N}(R)$ then we conclude
\beq
\sum_{\nu=1}^N\zeta(\nu)|z_\nu|^2\geq \frac{9}{10}\sum_{\nu=1}^N|z_\nu|^2=\frac{9}{10}R^2>\frac89R^2
\eeq
and therefore $\rho'(\sum_{\nu=1}^N\zeta(\nu)|z_\nu|^2)=\delta$. Next, we extend the Hamiltonian function of $\phi_H\circ\phi_{\epsilon F}$ to $\widetilde{H}:S^1\times M\to\R$ which we can choose to be autonomous outside the Darboux ball $B$. If we choose $\delta>0$ sufficiently small we can guarantee that outside $B$ the only periodic orbits of $\widetilde{H}$ of period less or equal to $2k$ are critical points of $\widetilde{H}$, see \cite{Hofer_Zehner_Book}, in particular line 4 \& 5  on page 185. In particular, $\phi_{\widetilde{H}}$ has still precisely one $2k$-cycle. Finally, by choosing $k$ sufficiently large and $\delta$ and $\epsilon$ sufficiently small,  $\phi_H\circ\phi_{\epsilon F}$ and thus $\phi_{\widetilde{H}}$ can be chosen to lie in an arbitrary $C^\infty$-neighborhood of the identity on $B^{2N}(R)$ resp.~$M$. Therefore, with Proposition \ref{prop:Milnor} the Theorem follows.
\end{proof}

%
\bibliographystyle{amsalpha}
\bibliography{../../../../../Bibtex/bibtex_paper_list}
\end{document}